\documentclass[USenglish,square,sort,comma,numbers]{article}

\usepackage[utf8]{inputenc}

\usepackage{amsmath,amsfonts,amssymb,amsthm,comment,cases,accents}

\newcommand{\N}{{\mathbb N}}
\newcommand{\R}{{\mathbb R}}

\theoremstyle{plain}
\newtheorem{Theorem}{Theorem}[section]

\newtheorem{lemma}[Theorem]{Lemma}
\newtheorem{Proposition}[Theorem]{Proposition}

\newenvironment{proposition}{\begin{Proposition} }{\end{Proposition}}
\newenvironment{Proof}{\begin{proof} }{\end{proof}}
\newenvironment{theorem}{\begin{Theorem} }{\end{Theorem}}

\theoremstyle{definition}
\newtheorem{Remark}[Theorem]{Remark}
\newenvironment{remark}{\begin{Remark} \rm}{\end{Remark}}


\def\dist{{\rm dist}\, }

\def\al{{\alpha}}
\def\D{{\Delta}}
\def\eps{{\varepsilon}}
\def\p{{\varphi}}
\def\o{{\omega}}
\def\O{{\Omega}}

\def\n{{\mathbf n}}
\def\bfA{{\mathbf A}}
\def\bfE{{\mathbf E}}
\def\bfH{{\mathbf H}}

\newcommand{\oveta}{{\overline \eta}_u}
\newcommand{\ovxi}{{\overline \xi}_u}

\begin{document}


\title{Klein-Gordon-Maxwell Systems with Nonconstant Coupling Coefficient}
\author{Monica Lazzo \thanks{
 Dipartimento di Matematica, Universit\`a degli Studi di Bari Aldo Moro, via E.~Orabona 4, 70125 Bari, Italy; e-mail: monica.lazzo@uniba.it,
 lorenzo.pisani@uniba.it} \and Lorenzo Pisani \footnotemark[1]}
 \date{}
\maketitle

  \abstract{We study a Klein-Gordon-Maxwell system, in a bounded spatial domain, under Neumann boundary conditions on the electric potential. We allow a nonconstant coupling coefficient. For sufficiently small data, we find infinitely many static solutions. \\[2mm]
  {\bf Keywords:} Klein-Gordon-Maxwell systems, static solutions, variational methods, Ljusternik-Schnirelmann theory \\
  {\bf MSC 2010:} 35J50, 35J57, 35Q40, 35Q60}

\section{Introduction}\label{intro}
We are interested in the system of nonautonomous elliptic equations
\begin{equation}\label{SYS-no}
\begin{alignedat}{2}
\Delta u  &= m^2 \, u - \bigl(q(x)\, \phi \bigr)^2 \, u  \qquad & \hbox{in $\O$,}  \\[1mm]
\Delta \phi &= \bigl(q(x)\, u \bigr)^2 \, \phi & \hbox{in $\O$,}
\end{alignedat}
\end{equation}
where $\D$ is the Laplace operator in $\R^3$, $\O \subset \R^3$ is a bounded and  smooth domain, $m \in \R$,  $q \in L^6(\O) \setminus\{0\}$. We complement these equations with the boundary conditions
\begin{subequations}\label{BC-no}
\begin{alignat}{2}
u  &= 0  \qquad & \hbox{on $\partial \O$,} \label{BC-a} \\
\dfrac{\partial \phi}{\partial \n}  &= \al & \hbox{on $\partial \O$,}
\label{BC-b}
\end{alignat}
\end{subequations}
where $\n$ is the unit outward normal vector to $\partial \O$ and $\al \in H^{1/2}(\partial \O)$.

\goodbreak
We look for {\em nontrivial} solutions, by which we mean pairs $(u,\phi) \in H^1_0(\O) \times H^1(\O)$, satisfying~\eqref{SYS-no}-\eqref{BC-no} in the usual weak sense, with $u\ne0$. Note that, if $(u,\phi)$ is a nontrivial solution, the pair $(-u,\phi)$ is a nontrivial solution  as well.

System~\eqref{SYS-no} arises in connection with the so-called Klein-Gordon-Maxwell equations, which model the interaction of a charged matter field with the electromagnetic field $(\bfE,\bfH)$.
They are the Euler-Lagrange equations of the Lagrangian density
\begin{equation*}
\begin{split}
{\cal L}_{KGM} & = \tfrac{1}{2} \bigl( |(\partial_t + i \, q \, \phi) \, \psi|^2 - |(\nabla - i \, q \, \bfA) \, \psi|^2 - m^2 \, |\psi|^2 \bigr) + {} \\[2mm]
& + \tfrac{1}{8\pi} \bigl(|
\nabla \phi + \partial_t \bfA|^2 - |\nabla \times \bfA|^2 \bigr) \, ,
\end{split}
\end{equation*}
where $\psi$ is a complex-valued function representing the matter field, while $\phi$ and $\bfA$ are the gauge potentials, related to the electromagnetic field via the equations
$\bfE = -\nabla \phi - \partial_t \bfA$, $\bfH = \nabla \times \bfA$.
For the derivation of the Lagrangian density, and details on the physical model, we refer to~\cite{BF-2002, bleeker, felsager}.  Let us point out that, in the physical model, $q$ is a constant which represents the electric charge of the matter field;  nonconstant coupling coefficients, however, are worth investigating from a mathematical viewpoint.

Confining attention to {\em standing waves}, in equilibrium with a purely electrostatic field, amounts to imposing $\psi(t,x) = e^{i \o t} \, u(x)$, where $u$ is a real-valued function and $\o$ is a real number, $\bfA = 0$, and $\phi=\phi(x)$. With  these choices, the Klein-Gordon-Maxwell equations considerably simplify and become
\begin{equation}\label{KGM}
\begin{alignedat}{2}
\Delta u  &= m^2 \, u - \bigl(\o + q(x) \, \phi\bigr)^2 u   \qquad & \hbox{in $\O$,} \\[1mm]
\Delta \phi &= q(x) \, \bigl(\o+ q(x) \, \phi\bigr) \, u^2 & \hbox{in $\O$.} \end{alignedat}
\end{equation}
In the special case of {\em static solutions}, corresponding to $\o=0$, System~\eqref{KGM} reduces to~\eqref{SYS-no}. In the physical model, the boundary condition~\eqref{BC-a} means that the matter field is confined to the region $\O$, while~\eqref{BC-b} amounts to prescribing the normal component of the electric field on $\partial \O$; up to a sign, the surface integral $\int_{\partial \O} \al \, d\sigma$ represents the flux of the electric field through the boundary of $\O$, and thus, the total charge contained in $\O$.

Problem~\eqref{KGM}-\eqref{BC-no} was investigated in~\cite{DPS-2010}, for a constant coupling coefficient $q$. In this case, a degeneracy phenomenon occurs and the existence of solutions to~\eqref{KGM}-\eqref{BC-no} does not depend on $\o$ (see~\cite{DPS-2010} and~\cite[Remark~1.2]{DPS-2014}). Thus, for autonomous systems, letting $\o=0$ in~\eqref{KGM} entails no loss of generality; this is not true if the coupling coefficient is not constant. The existence of infinitely many static solutions in the nonautonomous case was proved in~\cite{DPS-2014}, under the assumption that $q$ vanishes at most on a set of measure zero. Our main result generalizes Theorem~1.3 in~\cite{DPS-2014}, in that we impose no conditions on the zero-level set of $q$, and provides additional information on the solutions. We will address Problem~\eqref{KGM}-\eqref{BC-no} with $\o\ne 0$ in a forthcoming paper.

\goodbreak
\begin{theorem}\label{main}
Assume $\int_{\partial \O} \al \, d\sigma \ne 0$. There exists $\delta \in (0,\infty)$ such that, if $\|q\|_{L^6(\O)}\, \|\al\|_{H^{1/2}(\partial \O)}   < \delta$, the problem~\eqref{SYS-no}-\eqref{BC-no} has a sequence $\{(u_n,\phi_n)\}$ of nontrivial solutions with the following properties:
\begin{enumerate}
\item[{\rm (i)}] $u_0 \ge 0$ in $\O$;
\item[{\rm (ii)}] every bounded subsequence $\{u_{k_n}\}$ satisfies $\|q \, u_{k_n}\|_{L^3(\O)} \to 0$ as $n$ goes to infinity.
\end{enumerate}
\end{theorem}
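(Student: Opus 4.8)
The plan is to eliminate $\phi$ by a variational reduction and then apply Ljusternik--Schnirelmann theory to an even reduced functional. For fixed $u\in H^1_0(\O)$, the second equation in~\eqref{SYS-no} together with~\eqref{BC-b} is the Euler--Lagrange equation of the strictly concave map $\phi\mapsto -\tfrac12\int_\O(|\nabla\phi|^2 + q^2u^2\phi^2)\,dx + \int_{\partial\O}\al\phi\,d\sigma$; its unique maximizer $\phi_u$ exists precisely when the bilinear form $a_u(\phi,v)=\int_\O\nabla\phi\cdot\nabla v\,dx + \int_\O q^2u^2\phi v\,dx$ is coercive on $H^1(\O)$. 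Testing the equation for $\phi_u$ with the constant function and using $\int_{\partial\O}\al\,d\sigma\ne0$ shows coercivity fails exactly on the closed symmetric set $\mathcal Z=\{u:qu=0\}$, so I would work on the open symmetric set $\mathcal A = H^1_0(\O)\setminus\mathcal Z$ and set $J(u)=F(u,\phi_u)$, where $F$ is the action of the full system. The envelope identity, together with testing the $\phi_u$-equation against $\phi_u$, gives
\begin{equation*}
J(u)=\tfrac12\int_\O|\nabla u|^2 + \tfrac{m^2}2\int_\O u^2 + \tfrac12\int_{\partial\O}\al\,\phi_u\,d\sigma,\qquad \int_{\partial\O}\al\,\phi_u\,d\sigma = a_u(\phi_u,\phi_u)\ge0,
\end{equation*}
so $J$ is $C^1$, even (since $\phi_{-u}=\phi_u$), and its critical points yield solutions $(u,\phi_u)$ of~\eqref{SYS-no}--\eqref{BC-no}.

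Two structural estimates drive everything. First, the identity above gives $J(u)\ge\tfrac12\int_\O|\nabla u|^2$, so $J$ is bounded below and coercive on $\mathcal A$. Second, testing the $\phi_u$-equation with the constant $1$ yields $\int_\O q^2u^2\phi_u = \int_{\partial\O}\al\,d\sigma =: A\ne0$, whence Cauchy--Schwarz and Hölder give $|A|\le C\,\|q\,u\|_{L^3(\O)}\bigl(\int_\O q^2u^2\phi_u^2\bigr)^{1/2}$ and therefore
\begin{equation*}
\int_{\partial\O}\al\,\phi_u\,d\sigma \;\ge\; \int_\O q^2u^2\phi_u^2\,dx \;\ge\; \frac{A^2}{C^2\,\|q\,u\|_{L^3(\O)}^2}.
\end{equation*}
Thus $J(u)\to+\infty$ as $\|q\,u\|_{L^3(\O)}\to0$; in particular $J$ blows up near $\mathcal Z$ (and near $u=0\in\mathcal Z$), confining finite-energy sequences away from the degenerate set. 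Conversely, a uniform coercivity bound shows $\int_{\partial\O}\al\,\phi_u\,d\sigma$ stays bounded whenever $u$ ranges in a bounded set with $\|q\,u\|_{L^3(\O)}$ bounded below; combining the two directions yields property~(ii), since a bounded subsequence $\{u_{k_n}\}$ with $J(u_{k_n})\to\infty$ must have $\|q\,u_{k_n}\|_{L^3(\O)}\to0$.

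For the multiplicity I would verify the Palais--Smale condition for $J$ on $\mathcal A$ and then run the genus-based minimax. Coercivity bounds PS sequences and the blow-up estimate keeps them away from $\mathcal Z$, so $u_n\rightharpoonup u$ with $\phi_{u_n}$ bounded; the minimizer $u_0$ of $J$ over $\mathcal A$ is attained in the interior, hence is a critical point, and since $J(|u|)=J(u)$ it may be taken with $u_0\ge0$, giving~(i). To produce infinitely many critical values I would, for each $n$, choose an $n$-dimensional subspace $V_n\subset H^1_0(\O)$ with $V_n\cap\mathcal Z=\{0\}$ --- possible because $|\{q\ne0\}|>0$ lets one take bump functions supported on disjoint balls meeting $\{q\ne0\}$ in positive measure, so the restriction map $u\mapsto u|_{\{q\ne0\}}$ has infinite rank. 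Then $J$ is finite and continuous on the unit sphere of $V_n$, which exhibits symmetric sets of arbitrarily large genus inside the sublevel sets of $J$; the standard deformation argument then produces critical values $c_n\to+\infty$ and distinct solutions $(u_n,\phi_{u_n})$.

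The hard part will be the Palais--Smale condition, because the coupling is borderline: with $q\in L^6(\O)$ and $u,\phi_u\in H^1(\O)\hookrightarrow L^6(\O)$, the term $\int_\O q^2\phi_u^2\,u\,(u_n-u)\,dx$ sits exactly at the critical Sobolev exponent in dimension three, so weak convergence does not obviously pass to the limit and concentration at singularities of $q$ could destroy compactness. This is precisely where I expect the smallness hypothesis $\|q\|_{L^6(\O)}\|\al\|_{H^{1/2}(\partial\O)}<\delta$ to enter: it keeps $\|q\,\phi_u\|_{L^3(\O)}$ --- equivalently the $L^{3/2}$-norm of the potential $q^2\phi_u^2$ --- below the threshold at which the compact-embedding argument closes, thereby restoring the Palais--Smale condition and, with it, the whole minimax scheme.
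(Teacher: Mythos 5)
Your reduction is essentially the Benci--Fortunato scheme the paper uses, carried out without first homogenizing the Neumann datum: instead of writing $\phi=\varphi+\chi$ and working with $\varphi$, you keep the boundary term $\int_{\partial\O}\al\,\phi\,d\sigma$ in the action. That is legitimate, and your identity $J(u)=\tfrac12\|\nabla u\|_2^2+\tfrac{m^2}{2}\|u\|_2^2+\tfrac12\,a_u(\phi_u,\phi_u)$ is correct; it gives boundedness from below and coercivity more cleanly than the paper's estimate~\eqref{below} (which pays for the $\chi$-decomposition by needing the smallness of $\|q\|_6\|\al\|_{1/2}$ to absorb $-\int_\O q^2\chi^2u^2\,dx$). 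Your blow-up estimate $J(u)\ge c\,|A|^2/\|q\,u\|_3^2$ is the analogue of Lemma~\ref{eta-xi}(b)--(c), your genus construction matches Proposition~\ref{lambda-q}(c), and items (i)--(ii) are handled as in the paper. One local flaw: the ``uniform coercivity bound'' you invoke for the converse half of (ii) is false as stated --- the coercivity constant $c_{qu}$ of $a_u$ is \emph{not} bounded below uniformly over bounded sets with $\|q\,u\|_3\ge r$, since $\|q\,u\|_2$ can still degenerate. The paper repairs exactly this point by Rellich compactness: extract an $L^6$-convergent subsequence, note the limit stays in $\Lambda_q$, and use the continuity of $b\mapsto{\cal A}_b^{-1}$ (Proposition~\ref{lax-mil}); your argument needs the same fix.

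The genuine gap is the Palais--Smale condition, which you explicitly leave unproved. This is not a peripheral verification: PS underlies the attainment of the minimum (hence item (i)), the deformation lemma, and the entire Ljusternik--Schnirelmann minimax, so without it nothing in the multiplicity part is established. Moreover, the mechanism you conjecture --- that the smallness hypothesis keeps $\|q\,\phi_u\|_{L^3}$ below the absorption threshold for the critical term --- does not work: $\phi_u$ contains the component $\eta_u={\cal L}_{qu}(A/|\O|)$, whose average is of size $|A|/\|q\,u\|_3^2$ (this is precisely Lemma~\ref{eta-xi}(c)), so $\|q\,\phi_u\|_3$ is only \emph{bounded} along a PS sequence, never small, no matter how small $\|q\|_6\|\al\|_{1/2}$ is. The paper closes PS by a different route: bounded $J(u_n)$ forces $|\overline\eta_{u_n}|$ bounded, the gradient estimate $\|\nabla\eta_u\|_2\le\gamma\|q\,u\|_3^2|\overline\eta_u|$ of Lemma~\ref{eta-xi}(b) and the bound $\|\xi_u\|_\infty\le\|\chi\|_\infty$ then give $\{\phi_{u_n}\}$ bounded in $H^1(\O)$, and the equation~\eqref{PS1} is used to upgrade weak to strong convergence of $\{u_n\}$, with Proposition~\ref{PROP-J}(b)--(c) guaranteeing the limit stays in $\Lambda_q$. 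You would need to supply an argument of this kind before the minimax machinery can be run.
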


\begin{remark}
Unless $\|u_n\|_{H^1_0(\O)}\to \infty$ as $n\to \infty$, bounded subsequences of the sequence $\{u_n\}$ do exist. Plainly, any such subsequence has in $L^6(\O)$ a limit point $u$  such that $q \, u = 0$.
\end{remark}

At least for small data, assuming $\int_{\partial \O} \al \, d\sigma \ne 0$ is necessary for the existence of nontrivial solutions, as the following result shows.

\begin{theorem}\label{A-zero}
Suppose $\int_{\partial \O} \al \, d\sigma=0$. With the same $\delta$ as in Theorem~\ref{main}, assume $\|q\|_{L^6(\O)} \, \|\al\|_{H^{1/2}(\partial \O)} < \delta$. Then: Problem~\eqref{SYS-no}-\eqref{BC-no} has no  nontrivial solutions.
\end{theorem}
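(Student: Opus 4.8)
The plan is to argue by contradiction on the matter field: I will show that the smallness hypothesis forces $u=0$, which, by the definition of nontrivial solution, rules out nontrivial solutions. Throughout, let $(u,\phi)\in H^1_0(\O)\times H^1(\O)$ be a weak solution, write $\overline{\phi}=|\O|^{-1}\int_\O\phi\,dx$ for the mean of $\phi$ and $\widetilde{\phi}=\phi-\overline{\phi}$ for its zero-mean part. First I would extract information from the second equation by choosing constant and non-constant test functions. Testing $\D\phi=(qu)^2\phi$ with $v\equiv 1$, using $\partial\phi/\partial\n=\al$ together with the hypothesis $\int_{\partial\O}\al\,d\sigma=0$, gives the orthogonality relation
\[
\int_\O (q\,u)^2\,\phi\,dx=0 .
\]
Testing the same equation with $v=\phi$ yields $\int_\O(qu)^2\phi^2=-\|\nabla\phi\|_{L^2(\O)}^2+\int_{\partial\O}\al\,\phi\,d\sigma$, while testing $\D u=m^2u-(q\phi)^2u$ with $u$ (whose boundary term vanishes since $u\in H^1_0(\O)$) gives $\|\nabla u\|_{L^2(\O)}^2+m^2\|u\|_{L^2(\O)}^2=\int_\O(q\phi)^2u^2\,dx=\int_\O(qu)^2\phi^2\,dx$. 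Adding the last two identities produces the global energy balance
\[
\|\nabla u\|_{L^2(\O)}^2+\|\nabla\phi\|_{L^2(\O)}^2+m^2\|u\|_{L^2(\O)}^2=\int_{\partial\O}\al\,\phi\,d\sigma ,
\]
whose left-hand side is nonnegative because $m^2\ge 0$.

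Next I would control $\phi$ by $\al$. Since $\int_{\partial\O}\al\,d\sigma=0$, one may replace $\phi$ by $\widetilde{\phi}$ in the surface integral and then estimate $\int_{\partial\O}\al\,\widetilde{\phi}\,d\sigma$ by the trace theorem and the Poincar\'e--Wirtinger inequality, obtaining $\bigl|\int_{\partial\O}\al\,\phi\,d\sigma\bigr|\le C\,\|\al\|_{H^{1/2}(\partial\O)}\,\|\nabla\phi\|_{L^2(\O)}$. Inserting this into the energy balance and discarding the nonnegative $u$-terms gives the a priori bound $\|\nabla\phi\|_{L^2(\O)}\le C\,\|\al\|_{H^{1/2}(\partial\O)}$.

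The crux is to exploit the orthogonality relation to absorb the (a priori uncontrolled) additive constant $\overline{\phi}$. Expanding $\phi=\widetilde{\phi}+\overline{\phi}$ in $\int_\O(qu)^2\phi^2$ and using $\int_\O(qu)^2\phi\,dx=0$ to eliminate the cross term, I expect the clean identity
\[
\int_\O(qu)^2\phi^2\,dx=\int_\O(qu)^2\widetilde{\phi}^{\,2}\,dx-\overline{\phi}^{\,2}\,\|q\,u\|_{L^2(\O)}^2\le \int_\O(qu)^2\widetilde{\phi}^{\,2}\,dx ,
\]
so that the constant part only helps. Bounding the right-hand side by H\"older ($q\in L^6$, $u,\widetilde\phi\in L^6$) and the Sobolev and Sobolev--Poincar\'e embeddings, then using $\|\nabla\phi\|_{L^2}\le C\|\al\|_{H^{1/2}}$, I arrive at
\[
\|\nabla u\|_{L^2(\O)}^2+m^2\|u\|_{L^2(\O)}^2=\int_\O(qu)^2\phi^2\,dx\le C'\,\|q\|_{L^6(\O)}^2\,\|\al\|_{H^{1/2}(\partial\O)}^2\,\|\nabla u\|_{L^2(\O)}^2 .
\]
Hence $\bigl(1-C'\|q\|_{L^6(\O)}^2\|\al\|_{H^{1/2}(\partial\O)}^2\bigr)\|\nabla u\|_{L^2(\O)}^2+m^2\|u\|_{L^2(\O)}^2\le 0$. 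It suffices that the threshold $\delta$ of Theorem~\ref{main} also satisfy $C'\delta^2<1$, possibly after shrinking it; then the bracket is positive whenever $\|q\|_{L^6(\O)}\|\al\|_{H^{1/2}(\partial\O)}<\delta$, forcing $\nabla u=0$ and hence $u=0$ by the Poincar\'e inequality in $H^1_0(\O)$.

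The main obstacle is the additive constant of the Neumann potential $\phi$: the energy estimate controls only $\|\nabla\phi\|_{L^2}$, not $\overline{\phi}$, and a crude expansion of $\int_\O(qu)^2\phi^2$ would leave the unbounded term $\overline{\phi}^{\,2}\|qu\|_{L^2}^2$. The identity produced by the constant test function is precisely what turns this dangerous term into a favorable sign, and verifying that cancellation carefully is the heart of the argument; the accompanying trace and Sobolev estimates are routine.
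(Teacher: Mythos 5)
Your argument is correct, and it reaches the conclusion by a route that differs from the paper's in how the inhomogeneous Neumann datum is handled. The paper first lifts the boundary condition: it writes $\phi=\p+\chi$ with $\chi$ the solution of \eqref{chi} (harmonic when $A=0$), tests the two equations of the reduced problem \eqref{PROB} with $u$ and $\p$, and combines the resulting identities; every term then appears with a favourable sign except $-\int_\O q^2\chi^2u^2\,dx$, which is absorbed using the $L^\infty$ bound $\|\chi\|_\infty\le\kappa\,\|\al\|_{1/2}$ coming from elliptic regularity (that is, \eqref{chi-infty} and the estimate \eqref{co1}). You instead stay in the original variables and split $\phi$ into its mean and its zero-mean part: the constant test function gives the orthogonality relation $\int_\O(q\,u)^2\phi\,dx=A=0$, which makes the contribution of $\overline\phi$ to $\int_\O(q\,u)^2\phi^2\,dx$ nonpositive, while the trace theorem and the Poincar\'e--Wirtinger inequality control $\|\nabla\phi\|_2$ by $\|\al\|_{1/2}$. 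Both mechanisms are sound; yours avoids the lifting $\chi$ and its $L^\infty$ regularity at the price of a trace estimate, and the cancellation you verify for the cross term is indeed the crux and is carried out correctly.

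One point to fix: the theorem asserts nonexistence under the \emph{same} threshold $\delta=1/(\kappa\sigma)$ of \eqref{delta} used in Theorem~\ref{main}, and the paper achieves this because its nonexistence proof reuses exactly the estimate \eqref{co1} that determines $\delta$. Your constant $C'$ is built from the trace constant and the Sobolev--Poincar\'e constant for $\|\widetilde\phi\|_6\le C\,\|\nabla\phi\|_2$, so a priori it yields a different threshold. This is harmless --- one may redefine $\delta$ as the minimum of the two thresholds, since decreasing $\delta$ only strengthens the hypothesis of Theorem~\ref{main} --- but as written your proof establishes the statement for \emph{some} $\delta>0$, not verbatim for the $\delta$ of \eqref{delta}; you should say so explicitly or rerun your estimates with the constants $\kappa$ and $\sigma$ of the paper.
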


Note that, if $\int_{\partial \O} \al \, d\sigma=0$, every pair $(0,\phi)$, with $\phi$  harmonic function satisfying the Neumann boundary condition~\eqref{BC-b}, is a trivial solution to~\eqref{SYS-no}-\eqref{BC-no}.

Our results are obtained by way of variational methods.
We follow an approach introduced by Benci and Fortunato (in~\cite{BF-2002} for Klein-Gordon-Maxwell systems, and earlier in~\cite{BF-1998} for Schr\"odinger-Maxwell systems) and subsequently implemented by many authors. Most results in the literature concern systems posed in unbounded spatial domains, possibly featuring lower-order nonlinear perturbations;  for instance, see~\cite{azzo-pomp, carriao, daprile-mugnai, li, xu}. We also refer to~\cite{ghimenti1, ghimenti2} for  recent applications to Klein-Gordon-Maxwell systems with Neumann boundary conditions on Riemannian manifolds.

To prove our multiplicity result, we apply Ljusternik-Schnirelmann theory to a functional $J$, defined in a subset of $H^1_0(\O)$, whose critical points correspond with nontrivial solutions to Problem~\eqref{SYS-no}-\eqref{BC-no}. The definition of  $J$ depends on whether a certain Neumann problem is uniquely solvable. The easiest way to guarantee that this occurs is to assume, as in~\cite{DPS-2014}, that $q$ vanishes at most on a set of measure zero. Here, instead, we build the solvability requirement into $\Lambda_q$, the domain of $J$.

The paper is organized as follows. In Section~\ref{prelims} we define the set $\Lambda_q$ and address the solvability issue. In Section~\ref{J-birth} we define the functional $J$ and investigate its properties. Section~\ref{proof-of-main} is devoted to the proofs of Theorems~\ref{main} and~\ref{A-zero}.

\section{Preliminaries}\label{prelims}

Throughout the paper we will use the following notation:
\begin{itemize}
\item For any integrable function $f:\O\to \R$, $\|f\|_p$ is the usual norm in $L^p(\O)$ ($p\in[1,\infty]$) and $\overline f$ is the average of $f$ in $\O$;
\item $H^1_0(\O)$ is endowed with the norm $\|\nabla f\|_2$;
\item $H^1(\O)$ is endowed with the norm $\|f\|:= \left(\|\nabla f\|_2^2 + \left|\overline f\right|^2\right)^{1/2}$;
\item $H':= L(H^1(\O),\R)$;
\item $A:= \int_{\partial \O} \al \, d\sigma$, \ $\|\al\|_{1/2} := \|\al\|_{H^{1/2}(\partial \O)}$.
\end{itemize}

\subsection{Reduction to homogeneous boundary conditions}

We begin by turning Problem \eqref{SYS-no}-\eqref{BC-no} into an equivalent problem with homogeneous boundary conditions in both variables.
Let $\chi \in H^2(\O)$ be the unique solution of
\begin{equation}\label{chi}
\Delta \chi = \dfrac{A}{|\O|} \quad \hbox{in $\O$}\, , \qquad
 \dfrac{\partial \chi}{\partial \n} = \al \quad \hbox{on $\partial \O$}\, , \qquad
\int_\O \chi \, dx = 0 \, .
\end{equation}
With $\varphi:= \phi-\chi$, Problem~\eqref{SYS-no}-\eqref{BC-no} is equivalent to
\begin{equation}\label{PROB}
\begin{cases}
\ \Delta u = m^2 u - q^2 \, (\p+\chi)^2 \, u \quad & \hbox{in $\O$,} \\[1mm]
\ \Delta \p =  (q\, u)^2 \, (\p+\chi) -  \dfrac{A}{|\O|} & \hbox{in $\O$,} \\[1mm]
\ \ u = \dfrac{\partial \p}{\partial \n} = 0 & \hbox{on $\partial \O$.}
\end{cases}
\end{equation}

Weak solutions of~\eqref{PROB} correspond with critical points of
the functional $F$ defined in $H^1_0(\O) \times H^1(\O)$ by
\begin{equation*}
F(u,\p) =   \|\nabla u\|_2^2 +  \int_\O \bigl(m^2 - q^2\, (\p+\chi)^2 \bigr) u^2 \, dx - \|\nabla \p\|_2^2  + 2\, A \, \overline \p \, .
\end{equation*}
Indeed, standard computations show that $F$ is continuously differentiable in $H^1_0(\O) \times H^1(\Omega)$ with
\begin{align*}
\langle F'_u(u,\p) , v \rangle  &= 2 \int_\O \Bigl( \nabla u \nabla v  + \bigl(m^2 - q^2\, (\p+\chi)^2 \bigr) \, u \, v \Bigr) \, dx \, , \\[1mm]
\langle F'_\p(u,\p) , \psi \rangle  & =
- 2 \int_\O \Bigl( \nabla \p \nabla \psi  + \Bigl( (q\, u)^2\, (\p+\chi)  - \dfrac{A}{|\O|} \Bigr) \, \psi \Bigr)  \, dx \, ,
\end{align*}
for every $u, v \in  H^1_0(\O)$ and $\p, \psi\in H^1(\O)$.
However, $F$ is unbounded from above and from below, even modulo compact perturbations; this precludes a straightforward application of classical results in critical point theory.

Following~\cite{BF-2002}, we associate solutions to Problem~\eqref{PROB} with critical points of a functional~$J$ that depends only on the variable $u$ and falls within the scope of classical critical point theory. Roughly speaking, $J$ is the restriction of $F$ to the zero-level set of $F'_\p$. A key ingredient in the construction of $J$ is the invertibility of the map defined in the following proposition.

\begin{proposition}\label{lax-mil}
For $b \in L^3(\O)$, define ${\cal A}_b : H^1(\O) \to H'$ by
\begin{equation*}
\langle {\cal A}_b(\p),\psi \rangle := \int_\O \bigl( \nabla \p \nabla \psi + b^2  \p \, \psi \bigr)\, dx \, ; \end{equation*}
let $\displaystyle c_b := \inf_{\|\p\|=1} \langle{\cal A}_b(\p),\p\rangle$.
\begin{itemize}
\item[{\rm (a)}]
The map $b \in L^3(\O) \mapsto {\cal A}_b \in L(H^1(\O);H')$ is continuous.
\item[{\rm (b)}]
Assume $b\ne0$. Then: $c_b >0$, the map ${\cal A}_b$ is an isomorphism, and ${\cal L}_b:={\cal A}_b^{-1}$ has continuity constant $1/c_{b}$.
\item[{\rm (c)}]
The map $b \in L^3(\O)\setminus\{0\} \mapsto {\cal L}_b \in L(H';H^1(\O))$
is continuous.
\end{itemize}
\end{proposition}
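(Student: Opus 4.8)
The plan is to treat the three items in turn, relying on the Sobolev embedding $H^1(\O)\hookrightarrow L^6(\O)$, valid on the bounded smooth domain $\O\subset\R^3$; denote by $S$ its constant, so that $\|\p\|_6\le S\,\|\p\|$ for every $\p\in H^1(\O)$. For (a) I would estimate the operator norm of the difference directly. For $b_1,b_2\in L^3(\O)$ and $\p,\psi\in H^1(\O)$ one has
\[
\langle({\cal A}_{b_1}-{\cal A}_{b_2})(\p),\psi\rangle=\int_\O (b_1^2-b_2^2)\,\p\,\psi\,dx ,
\]
and, writing $b_1^2-b_2^2=(b_1-b_2)(b_1+b_2)$, the generalized Hölder inequality with exponents $3,3,6,6$ gives
\[
|\langle({\cal A}_{b_1}-{\cal A}_{b_2})(\p),\psi\rangle|\le S^2\,\|b_1-b_2\|_3\,\|b_1+b_2\|_3\,\|\p\|\,\|\psi\| .
\]
Hence $\|{\cal A}_{b_1}-{\cal A}_{b_2}\|_{L(H^1;H')}\le S^2\|b_1-b_2\|_3\|b_1+b_2\|_3$, a local Lipschitz bound from which continuity follows; taking $b_2=0$ also shows each ${\cal A}_b$ is bounded, with $\|{\cal A}_b\|\le 1+S^2\|b\|_3^2$.

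For (b), the bilinear form is symmetric, bounded (by the previous estimate) and nonnegative, so by Lax--Milgram it suffices to prove coercivity, i.e.\ $c_b>0$. This is the main obstacle, because $b^2$ may vanish on a large set and the term $\int_\O b^2\p^2\,dx$ need not control the average $\overline\p$ entering the $H^1$ norm. I would argue by contradiction and compactness. If $c_b=0$, choose $\p_n$ with $\|\p_n\|=1$ and $\langle{\cal A}_b(\p_n),\p_n\rangle=\|\nabla\p_n\|_2^2+\int_\O b^2\p_n^2\,dx\to0$; then $\|\nabla\p_n\|_2\to0$ and $\int_\O b^2\p_n^2\,dx\to0$, so $|\overline{\p_n}|^2\to1$. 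Passing to a subsequence, $\overline{\p_n}\to c$ with $|c|=1$, and since $\{\p_n\}$ is bounded in $H^1(\O)$ we have $\p_n\rightharpoonup\p$ weakly in $H^1(\O)$ and strongly in $L^p(\O)$ for $p<6$ by the compact Sobolev embedding; as $\nabla\p_n\rightharpoonup0$, the limit $\p$ is constant and equals $c$. Writing $\eta_n:=\p_n-c\to0$ in $L^3(\O)$, I expand
\[
\int_\O b^2\p_n^2\,dx=c^2\int_\O b^2\,dx+2c\int_\O b^2\eta_n\,dx+\int_\O b^2\eta_n^2\,dx\ge c^2\int_\O b^2\,dx+2c\int_\O b^2\eta_n\,dx ,
\]
where the middle term tends to $0$ by Hölder ($|\int_\O b^2\eta_n\,dx|\le\|b\|_3^2\|\eta_n\|_3$). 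Taking the lower limit yields $0\ge c^2\int_\O b^2\,dx>0$ since $b\ne0$, a contradiction; hence $c_b>0$.

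Granted coercivity, Lax--Milgram makes ${\cal A}_b$ an isomorphism. The bound $\langle{\cal A}_b(\p),\p\rangle\ge c_b\|\p\|^2$ gives $\|{\cal A}_b(\p)\|_{H'}\ge c_b\|\p\|$, whence $\|{\cal L}_b f\|\le c_b^{-1}\|f\|_{H'}$; symmetry of the form (the bottom of the spectrum of the associated self-adjoint operator is exactly $c_b$) shows this constant is sharp, so ${\cal L}_b$ has continuity constant $1/c_b$. Finally, for (c) I would use the resolvent-type identity ${\cal L}_{b_1}-{\cal L}_{b_2}={\cal L}_{b_1}({\cal A}_{b_2}-{\cal A}_{b_1}){\cal L}_{b_2}$, which gives
\[
\|{\cal L}_{b_1}-{\cal L}_{b_2}\|\le \frac{1}{c_{b_1}\,c_{b_2}}\,\|{\cal A}_{b_1}-{\cal A}_{b_2}\|_{L(H^1;H')} .
\]
Since $|c_{b_1}-c_{b_2}|\le\|{\cal A}_{b_1}-{\cal A}_{b_2}\|_{L(H^1;H')}$ (the infimum of two uniformly close functions differ by at most their uniform distance), part (a) shows $b\mapsto c_b$ is continuous; thus near any $b_1\ne0$ the factor $1/(c_{b_1}c_{b_2})$ stays bounded while $\|{\cal A}_{b_1}-{\cal A}_{b_2}\|\to0$, and continuity of $b\mapsto{\cal L}_b$ follows.
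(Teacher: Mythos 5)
Your proof is correct and follows essentially the same route as the paper: the same H\"older--Sobolev estimate for (a), the same contradiction argument for (b) splitting $\p_n$ into its mean and oscillating parts before invoking Lax--Milgram, and for (c) the standard continuity-of-inversion argument that the paper simply cites. The only cosmetic differences are that you reach the constant limit in (b) via Rellich compactness where the paper uses the Poincar\'e--Wirtinger inequality directly, and that you spell out the resolvent identity in (c).
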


\begin{proof}
(a)\ Let $b_n , b\in L^3(\O)$. Suppose $\|b_n -b\|_3 \to 0$, hence $\|b_n^2 -b^2\|_{3/2} \to 0$.
By H\"older's inequality and Sobolev's embedding theorem, for every $n$ and for every $\p, \psi \in H^1(\O)$, we have
\begin{equation*}
\begin{split}
\bigl| \langle ({\cal A}_{b_n} - {\cal A}_b)(\p) , \psi \rangle \bigr| & =
\biggl| \int_\O (b_n^2 - b^2) \, \p\, \psi \, dx  \biggr|
\le c \, \|b_n^2-b^2\|_{3/2} \; \|\p\| \, \|\psi\|
\end{split}
\end{equation*}
for some $c\in(0,\infty)$. This implies ${\cal A}_{b_n} \to {\cal A}_b$ in $L(H^1(\O);H')$. \\
(b)\ Let $b \in L^3(\O) \setminus\{0\}$. By way of contradiction, suppose $c_b =0$ and take a sequence $\{\p_n\} \subset H^1(\O)$ such that $\|\p_n\| = 1$ and $\langle {\cal A}_b(\p_n),\p_n \rangle  \to 0$. Since
$\langle {\cal A}_b(\p_n),\p_n \rangle  \ge \|\nabla \p_n\|_2^2$, we get
$\|\nabla \p_n\|_2 \to 0$, which implies $\|\p_n - \overline \p_n\|_p\to 0$ for every $p \in [1,6]$ (by the Poincar\'e-Wirtinger inequality) and $|\overline \p_n|\to 1$.
Now observe that
\begin{equation}\label{quad}
\int_\O b^2 \, \p_n^2 \, dx  =
\int_\O \Bigl( b^2 \, (\p_n-\overline \p_n)^2 + 2 \, b^2 \, (\p_n-\overline \p_n) \, \overline \p_n + b^2 \, \overline \p_n^2 \Bigr) \, dx \, .
\end{equation}
Being smaller than $\langle {\cal A}_b(\p_n),\p_n \rangle$, the left-hand side
in~\eqref{quad} tends to $0$; moreover,
\begin{gather*}
\int_\O b^2 \, (\p_n-\overline \p_n)^2 \, dx \le \|b\|_3^2 \, \|\p_n-\overline \p_n\|_6^2 \to 0 \, , \\
\biggl|\int_\O b^2 \, (\p_n-\overline \p_n) \, \overline \p_n \, dx\biggr| \le
|\overline \p_n| \, \|b\|_3^2 \, \|\p_n-\overline \p_n\|_3 \to 0 \, ,
\\ \int_\O b^2 \, \overline \p_n^2 \, dx  \to  \|b\|_2^2 \, .
\end{gather*}
Thus, \eqref{quad} yields $b=0$, a contradiction.
The remaining assertions follow from the Lax-Milgram lemma, which is applicable because the bilinear form associated with ${\cal A}_b$ is coercive, with coercivity constant $c_b$. \\
(c)\ The assertion readily follows from Part (a) and the continuity of the inversion operator.
\end{proof}

\begin{remark}\label{equation}
For $\rho\in L^{6/5}(\O)$, let ${\cal T}_\rho$ be the linear form defined by $\langle {\cal T}_{\rho} ,\p \rangle :=\int_\O \rho\, \p \, dx$. Following common practice, we will sometimes identify ${\cal T}_\rho$ with $\rho$. \\
Fix $b \in L^3(\O) \setminus \{0\}$. In view of Proposition~\ref{lax-mil},
${\cal L}_{b}(\rho)$ is the unique solution in $H^1(\O)$ of the homogeneous Neumann problem associated with the equation
\begin{equation*}
- \Delta \p + b^2 \, \p = \rho .
\end{equation*}
Note that $\|{\cal L}_{b}(\rho)\| \le  {\|\rho\|_{6/5}}/{c_b}$.
Furthermore, ${\cal L}_{b}(\rho)$ depends continuously on $b$ and $\rho$: if  $b_n \to b$ in $L^3(\O)\setminus \{0\}$ and $\rho_n \to \rho$ in $L^{6/5}(\O)$, then
\begin{equation*}
\|{\cal L}_{b_n} (\rho_n) - {\cal L}_{b} (\rho)\| \le
\|{\cal L}_{b_n}\| \, \|\rho_n - \rho\|_{6/5} + \|{\cal L}_{b_n} - {\cal L}_{ b}\| \, \|\rho\|_{6/5} \to 0 \, .
\end{equation*}
\end{remark}

\begin{remark}\label{max-prin}
With the same notation as in the previous remark, suppose that $\rho$ does not change sign in $\O$.
Since the bilinear form associated with ${\cal A}_b$ is symmetric, ${\cal L}_{b}(\rho)$ can be characterized as the unique minimizer of the functional $f: H^1(\O) \to \R$ defined by $f(\p) = \frac{1}{2} \, \langle {\cal A}_b(\p),\p \rangle - \langle {\cal T}_\rho, \p \rangle$.
Observing that $f({\rm sign}(\rho) |{\cal L}_{b}(\rho)|) \le f({\cal L}_{b}(\rho))$, we obtain ${\rm sign}(\rho) |{\cal L}_{b}(\rho)| = {\cal L}_{b}(\rho)$, which implies $\rho \, {\cal L}_{b}(\rho) \ge 0$ in $\O$.
\end{remark}

\subsection{The set $\Lambda_q$}\label{lambda}

For $u \in H^1_0(\O)$, let $\rho_u := \dfrac{A}{|\O|} - (q \, u)^2 \, \chi$.
With the notation introduced in Proposition~\ref{lax-mil},
we have
\begin{equation*}
F'_\p(u,\p) = 2 \, \bigl( -  {\cal A}_{qu} (\p) + {\rho_u} \bigr)
\end{equation*}
for every $(u,\p)\in \Lambda_q \times H^1(\O)$.
By Proposition~\ref{lax-mil}(b), the operator ${\cal A}_{qu}$ is invertible if, and only if, $u$ belongs to the set
\begin{equation*}
\Lambda_q := \Bigl\{ u \in H^1_0(\O) \, \bigl| \, q \, u \not= 0 \Bigr\} \, .
\end{equation*}

Incidentally, we point out that, in order to find nontrivial solutions to~\eqref{PROB}, confining~$u$ within~$\Lambda_q$ is not a mere technical requirement. Indeed, if $(u,\p)$ is a solution to~\eqref{PROB} and $q \, u =0$, then $u$ satisfies $\Delta u  = m^2 \, u$ in $\O$, hence $u =0$.

If $q$ vanishes at most on a set of measure zero, as assumed in~\cite{DPS-2014}, $\Lambda_q$ equals $H^1_0(\O) \setminus\{0\}$. In general, $\Lambda_q$ satisfies the following properties.

\begin{proposition}\label{lambda-q}~{}
\begin{enumerate}
\item[{\rm (a)}] $\Lambda_q$ is open in $H^1_0(\O)$ with $\partial \Lambda_q = \bigl\{ u \in H^1_0(\O) \, | \, q \, u = 0 \bigr\}$.
\item[{\rm (b)}] If $u\in H^1_0(\O)$ and $\dist(u,\partial \Lambda_q) \to 0$, then $\|q \, u\|_3 \to 0$.
\item[{\rm (c)}] $\Lambda_q$ contains subsets with arbitrarily large genus.
\end{enumerate}
\end{proposition}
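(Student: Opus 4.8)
The three statements all rest on a single structural fact that I would establish first: the multiplication map $T\colon H^1_0(\O)\to L^3(\O)$, $Tu:=q\,u$, is linear and \emph{bounded}. Indeed, by Hölder's inequality (with $\tfrac13=\tfrac16+\tfrac16$) and the Sobolev embedding $H^1_0(\O)\hookrightarrow L^6(\O)$ one gets $\|q\,u\|_3\le\|q\|_6\,\|u\|_6\le S\,\|q\|_6\,\|\nabla u\|_2$, so $T$ is continuous with $\|T\|\le S\|q\|_6$. Two consequences are immediate: $\ker T=\{u: q\,u=0\}$ is a closed subspace, and $\Lambda_q=H^1_0(\O)\setminus\ker T$ is open. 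The only other preliminary I need is that $\Lambda_q\ne\emptyset$: since $q\not\equiv0$, the set $E:=\{q\ne0\}$ has positive measure, and taking a Lebesgue density point $x_0\in E$ interior to $\O$ together with a nonnegative bump function $w\in C_c^\infty$ supported near $x_0$ and positive there yields $q\,w\ne0$, i.e. $w\in\Lambda_q$.

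For part (a), openness is exactly the continuity of $T$. For the boundary, I would argue by double inclusion. Since $\Lambda_q$ is open, $\partial\Lambda_q=\overline{\Lambda_q}\setminus\Lambda_q\subset H^1_0(\O)\setminus\Lambda_q=\ker T$. For the reverse inclusion I take any $u\in\ker T$ and the $w\in\Lambda_q$ fixed above, and note $q(u+\eps w)=\eps\,q\,w\ne0$ for every $\eps>0$, so $u+\eps w\in\Lambda_q$ and $u+\eps w\to u$ as $\eps\to0$; hence $u\in\overline{\Lambda_q}\setminus\Lambda_q=\partial\Lambda_q$. This gives $\partial\Lambda_q=\ker T=\{u:q\,u=0\}$.

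Part (b) is a quantitative restatement of the boundedness of $T$. Having identified $\partial\Lambda_q=\ker T$, for every $v\in\ker T$ I would write $q\,u=q\,u-q\,v=T(u-v)$, so that $\|q\,u\|_3\le\|T\|\,\|\nabla(u-v)\|_2$; taking the infimum over $v\in\ker T$ yields the pointwise bound $\|q\,u\|_3\le\|T\|\,\dist(u,\partial\Lambda_q)$, from which the stated limit follows at once.

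Part (c) is the real point, and the obstacle is to ensure that \emph{every} nonzero element of a finite-dimensional subspace stays in $\Lambda_q$. The plan is, for each $n$, to build an $n$-dimensional subspace $V_n\subset H^1_0(\O)$ with $V_n\setminus\{0\}\subset\Lambda_q$; then the sphere $S_n:=\{u\in V_n:\|\nabla u\|_2=1\}$ is symmetric, avoids $0$, and is odd-homeomorphic to the standard $S^{n-1}$, so its genus is $\gamma(S_n)=n$, and letting $n\to\infty$ finishes the proof. To control all of $V_n$ at once I would use disjoint supports: since a.e. point of $E$ is a density point, I can select $n$ pairwise disjoint balls $B_1,\dots,B_n\subset\O$ with $|B_i\cap E|>0$, and choose $u_i\in C_c^\infty(B_i)$, $u_i\ge0$, positive on a sub-ball meeting $E$ in positive measure, so that $q\,u_i\ne0$. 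For any $u=\sum_i c_i u_i$ with some $c_j\ne0$, disjointness of supports gives $u=c_j u_j$ on $B_j$, whence $q\,u=c_j\,q\,u_j\ne0$ on $B_j\cap E$; thus $u\in\Lambda_q$. Taking $V_n=\mathrm{span}\{u_1,\dots,u_n\}$ then does the job, the delicate point throughout being only the measure-theoretic selection of the balls $B_i$ so that each genuinely meets the support of $q$.
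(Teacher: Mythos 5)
Your proof is correct and follows essentially the same route as the paper: continuity of the multiplication operator $u\mapsto q\,u$ from $H^1_0(\O)$ to $L^3(\O)$ gives (a) and (b), and disjointly supported bump functions meeting the support of $q$ give $n$-dimensional subspaces whose punctured span lies in $\Lambda_q$, hence spheres of genus $n$. The only (immaterial) variation is in (a), where you identify $\partial\Lambda_q$ by perturbing points of the kernel with a fixed witness $w\in\Lambda_q$, while the paper simply notes that the kernel is a proper closed subspace and therefore has empty interior; your treatment of (c) is in fact slightly more explicit than the paper's about why every nonzero combination stays in $\Lambda_q$.
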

\begin{proof}
(a)\ Consider the linear operator ${\cal Q} := u \in H^1_0(\O) \mapsto q \, u \in L^3(\Omega)$; clearly, $\Lambda_q = H^1_0(\O) \setminus {\cal Q}^{-1}(0)$. By H\"older's inequality and Sobolev's embedding theorem,
${\cal Q}$ is continuous, hence ${\cal Q}^{-1}(0)$ is closed in $H^1_0(\O)$ and $\Lambda_q$ is open.
Moreover, ${\cal Q}^{-1}(0)$ is a proper linear subset of $H^1_0(\O)$, and thus, it  has empty interior;
it follows at once that $\partial \Lambda_q = {\cal Q}^{-1}(0)$. \\
(b)\ Let $\{u_n\} \subset \Lambda_q$ and assume $\dist(u_n,\partial \Lambda_q)\to 0$. Fix $\eps \in (0,\infty)$. Eventually, $\dist(u_n,\partial\Lambda_q)<{\eps}$, hence $\|\nabla(u_n{-}v_n)\|_2 < {\eps}$ for some $v_n \in \partial \Lambda_q$, and   \begin{equation*}
\|q\, u_n\|_3  = \|q\, (u_n{-}v_n)\|_3
 \le \|q\|_6 \, \|u_n{-}v_n\|_6 < c \, \eps \, ,
\end{equation*}
for some $c \in (0,\infty)$. This proves that $\|q \, u_n\|_3 \to 0$.\\
(c)\ Let $S$ be the essential support of $q$, defined as the complement in $\O$ of the largest open set in which $q$ equals zero almost everywhere; note that $|S|>0$. \\
Fix $k \in \N$. Let $A_1, \ldots, A_k$ be pairwise disjoint open subsets of $\O$ that have nonempty intersection with $S$. For every $i\in\{1,\ldots,k\}$, we can choose $u_i$ in ${\mathcal D}(A_i)\cap \Lambda_q$.
(If no such function existed, we would have $q\, u=0$ for every $u\in {\mathcal D}(A_i)$, which implies $q=0$ a.e.~in $A_i$, whence $A_i \subset \O \setminus S$, a contradiction.)
Clearly, $u_1,\ldots,u_k$ are linearly independent elements of $\Lambda_q$.
It follows that $\Lambda_q$ contains spheres of arbitrary dimension, which proves the assertion.
\end{proof}

\begin{remark}
The arguments in the proof of Proposition~\ref{lambda-q} apply to any multiplication operator between Lebesgue spaces and show that the kernel has infinite codimension.
\end{remark}

\section{The constrained functional}\label{J-birth}

In view of the observations at the beginning of Section~\ref{lambda}, the set
\begin{equation*}
Z_q := \bigl\{ (u,\p) \in \Lambda_q \times H^1(\O) \, | \, F'_\p(u,\p)=0 \bigr\}
\end{equation*}
is the graph of the map $\Phi:\Lambda_q \longrightarrow H^1(\O)$ defined by
\begin{equation*}
\Phi(u):= {\cal L}_{qu}(\rho_u) \, .
\end{equation*}
Note that $F''_{\p \p}(u,\p)=-2 \, {\cal A}_{qu}$ for every $(u,\p) \in \Lambda_q \times H^1(\O)$, hence $F''_{\p \p}(u,\p)$ is an isomorphism, by Proposition~\ref{lax-mil}(b); moreover, $F''_{\p u}$ and $F''_{\p \p}$ are continuous in $\Lambda_q \times H^1(\Omega)$. This implies that $\Phi$ is continuously differentiable in $\Lambda_q$.

Constraining the functional $F$ on the set $Z_q$ amounts to
considering the functional $J : \Lambda_q \longrightarrow \R$ defined by
\begin{equation*}
J(u) = F\bigl(u,\Phi(u)\bigr) \, \, .
\end{equation*}
The following assertions are a straightforward consequence of the construction of $J$.

\begin{proposition}\label{CF}~{ }
The functional $J$ is continuously differentiable in $\Lambda_q$. Furthermore,
$(u,\p) \in \Lambda_q \times H^1(\O)$ is a critical point of $F$ if and only if $u$ is a critical point of $J$ and $\p = \Phi(u)$.
\end{proposition}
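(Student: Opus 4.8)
The plan is to read off both assertions from the chain rule, exploiting the fact that $\Phi(u)$ is by construction the point where the partial derivative $F'_\p$ vanishes. For the regularity, I would write $J = F\circ G$, where $G:\Lambda_q \to \Lambda_q \times H^1(\O)$ is given by $G(u)=(u,\Phi(u))$. Since $F$ is continuously differentiable on $H^1_0(\O)\times H^1(\O)$ and $\Phi$ is continuously differentiable on $\Lambda_q$ (as recalled just before the statement), the map $G$ is continuously differentiable; hence so is $J$, and the chain rule yields, for every $u\in\Lambda_q$ and every $v\in H^1_0(\O)$,
\begin{equation*}
\langle J'(u),v\rangle = \langle F'_u(u,\Phi(u)),v\rangle + \langle F'_\p(u,\Phi(u)),\Phi'(u)\,v\rangle \, .
\end{equation*}

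The key observation is that the second term always vanishes. Indeed, by the very definition of $\Phi$ and of $Z_q$, the pair $(u,\Phi(u))$ belongs to $Z_q$ for every $u\in\Lambda_q$, that is, $F'_\p(u,\Phi(u))=0$. Consequently,
\begin{equation*}
\langle J'(u),v\rangle = \langle F'_u(u,\Phi(u)),v\rangle \qquad \text{for all } v\in H^1_0(\O) \, ,
\end{equation*}
so that $J'(u)=0$ if and only if $F'_u(u,\Phi(u))=0$. This identity encodes the Benci--Fortunato reduction and is the crux of the argument.

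It then remains to establish the equivalence. Suppose first that $(u,\p)\in\Lambda_q\times H^1(\O)$ is a critical point of $F$. From $F'_\p(u,\p)=0$ and the formula $F'_\p(u,\p)=2\bigl(-{\cal A}_{qu}(\p)+\rho_u\bigr)$ one gets $\p={\cal L}_{qu}(\rho_u)=\Phi(u)$, the solution being unique because ${\cal A}_{qu}$ is an isomorphism by Proposition~\ref{lax-mil}(b). Then $F'_u(u,\p)=F'_u(u,\Phi(u))=0$, whence $J'(u)=0$ by the identity above. Conversely, suppose $u$ is a critical point of $J$ and $\p=\Phi(u)$. Then $F'_\p(u,\p)=0$ holds by construction, while the identity gives $F'_u(u,\p)=F'_u(u,\Phi(u))=0$ from $J'(u)=0$; thus $(u,\p)$ is a critical point of $F$. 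I do not expect a genuine obstacle here: the whole proof is a matter of the chain rule together with the constraint $F'_\p(u,\Phi(u))\equiv 0$, and the only point that actually invokes the earlier results is the uniqueness of $\Phi(u)$, which rests on the invertibility stated in Proposition~\ref{lax-mil}(b).
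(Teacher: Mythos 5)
Your proposal is correct and is precisely the standard Benci--Fortunato reduction argument that the paper itself invokes: the paper gives no written proof, stating only that the assertions are ``a straightforward consequence of the construction of $J$,'' and the details it leaves implicit are exactly your chain-rule computation together with the identity $F'_\p(u,\Phi(u))=0$ and the uniqueness of $\Phi(u)$ from Proposition~\ref{lax-mil}(b). No gaps; nothing further is needed.
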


On account of Proposition~\ref{CF}, nontrivial solutions to Problem~\eqref{SYS-no}-\eqref{BC-no} are in one-to-one correspondence with critical points of $J$ in $\Lambda_q$.

\begin{remark}\label{even}
By the very definition of $\Phi$, we have $\Phi(u)=\Phi(|u|)$ for every $u \in \Lambda_q$; this readily implies $J(u)=J(|u|)$ for every $u \in \Lambda_q$.
\end{remark}

Before investigating further properties of $J$, we note that
\begin{equation*}
\Phi(u) = \eta_u + \xi_u \, ,
\end{equation*}
with $\eta_u := {\cal L}_{qu} ({A}/{|\O|})$ and $\xi_u := -{\cal L}_{qu} ((q \, u)^2 \, \chi)$, for every $u \in \Lambda_q$. By Remark~\ref{equation}, $\eta_u$ and $\xi_u$ satisfy the equations
\begin{align}
- \D \eta_u + (q \, u)^2 \, \eta_u  &=  \dfrac{A}{|\O|} \, , \label{eta-u} \\[2mm]
- \D \xi_u + (q \, u)^2 \, \xi_u &= - \, (q \, u)^2 \, \chi  \, , \label{xi-u}
\end{align}
respectively, with homogeneous Neumann boundary conditions.

\begin{lemma}\label{eta-xi}~{}
\begin{itemize}
\item[{\rm (a)}] For every $u \in \Lambda_q$, $A \, \eta_u \ge 0$ in $\O$.
\item[{\rm (b)}] Let $\gamma \in(0,\infty)$ be such that $\|f-\overline f\|_3 \le \gamma \, \|\nabla f\|_2$ for every $f\in H^1(\O)$. Then:
$\|\nabla \eta_u\|_2 \le \gamma \, \|q\, u\|_3^2 \; | \overline \eta_u |$  for every $u \in \Lambda_q$.
\item[{\rm (c)}] Suppose $A\ne 0$. If $u\in \Lambda_q$ and $\|q \, u\|_3 \to 0$, then $|\overline \eta_u| \to \infty$.
\item[{\rm (d)}] For every $u \in \Lambda_q$, $\|\xi_u \|_{\infty} \le \|\chi\|_\infty$.
\end{itemize}
\end{lemma}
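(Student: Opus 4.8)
The four assertions fall into two groups: (a) and (d) are comparison/maximum-principle statements about the Neumann problems \eqref{eta-u} and \eqref{xi-u}, whereas (b) and (c) are energy estimates, with (c) resting on (b). The plan for (a) is simply to quote Remark~\ref{max-prin}: since $\eta_u={\cal L}_{qu}(A/|\O|)$ and the datum $A/|\O|$ is a constant, hence a function of constant sign in $L^{6/5}(\O)$, that remark gives $\frac{A}{|\O|}\,\eta_u\ge0$ in $\O$, and multiplying by $|\O|>0$ yields $A\,\eta_u\ge0$.

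For (b) the plan is to test the weak form of \eqref{eta-u} with $\psi=\eta_u-\oveta$. As $\psi$ has zero average the right-hand side $\frac{A}{|\O|}\int_\O\psi\,dx$ vanishes, and $\nabla\psi=\nabla\eta_u$, so I obtain
\[
\|\nabla\eta_u\|_2^2=-\int_\O(q\,u)^2\,\eta_u\,(\eta_u-\oveta)\,dx .
\]
Writing $\eta_u=(\eta_u-\oveta)+\oveta$ and discarding the nonpositive term $-\int_\O(q\,u)^2(\eta_u-\oveta)^2\,dx$ leaves
\[
\|\nabla\eta_u\|_2^2\le|\oveta|\int_\O(q\,u)^2\,|\eta_u-\oveta|\,dx .
\]
I would then apply H\"older's inequality with exponents $3/2$ and $3$, which produces the factor $\|(q\,u)^2\|_{3/2}=\|q\,u\|_3^2$, followed by the Poincar\'e-Wirtinger inequality $\|\eta_u-\oveta\|_3\le\gamma\,\|\nabla\eta_u\|_2$. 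Cancelling one power of $\|\nabla\eta_u\|_2$ (the case $\|\nabla\eta_u\|_2=0$ being trivial) gives the asserted bound.

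For (c) the starting point is the identity obtained by testing \eqref{eta-u} with $\psi\equiv1$, namely $\int_\O(q\,u)^2\eta_u\,dx=A$. Splitting $\eta_u=\oveta+(\eta_u-\oveta)$ recasts it as
\[
A=\oveta\,\|q\,u\|_2^2+\int_\O(q\,u)^2(\eta_u-\oveta)\,dx ,
\]
and I would estimate the last integral as in (b), this time inserting the bound just proved to get $\bigl|\int_\O(q\,u)^2(\eta_u-\oveta)\,dx\bigr|\le\gamma^2\,\|q\,u\|_3^4\,|\oveta|$. Hence $|A|\le|\oveta|\bigl(\|q\,u\|_2^2+\gamma^2\|q\,u\|_3^4\bigr)$, and since $\|q\,u\|_2\le|\O|^{1/6}\|q\,u\|_3$ the parenthesis tends to $0$ as $\|q\,u\|_3\to0$; with $|A|>0$ fixed, solving for $|\oveta|$ forces $|\oveta|\to\infty$.

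Part (d) is the most delicate and I would handle it by a truncation (Stampacchia) argument on \eqref{xi-u}. Set $M=\|\chi\|_\infty$ and $w=\xi_u-M$, and test the weak form of \eqref{xi-u} with $w^+=\max(w,0)\in H^1(\O)$. Using $\nabla w^+=\nabla\xi_u$ on $\{\xi_u>M\}$ and zero almost everywhere else, and regrouping the terms containing $M$, one arrives at
\[
\|\nabla w^+\|_2^2+\int_\O(q\,u)^2(w^+)^2\,dx=-\int_\O(q\,u)^2(\chi+M)\,w^+\,dx .
\]
Since $\chi+M\ge0$ the right-hand side is nonpositive while the left-hand side is nonnegative, so both vanish; the vanishing gradient makes $w^+$ constant, and then $q\,u\ne0$ (because $u\in\Lambda_q$) forces that constant to be $0$, i.e.\ $\xi_u\le M$. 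The symmetric argument applied to $-\xi_u$, using $\chi\le M$, gives $\xi_u\ge-M$, and together these yield $\|\xi_u\|_\infty\le\|\chi\|_\infty$. The main obstacle is (b): the choice of test function $\eta_u-\oveta$ and the decision to discard the sign-definite term are exactly what close the estimate, and the whole of (c) hinges on it; (d), by contrast, is routine once the truncation method is adopted, the only delicate point being the appeal to $u\in\Lambda_q$ to exclude a nonzero constant $w^+$.
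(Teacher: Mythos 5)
Your parts (a), (b), and (c) follow essentially the same route as the paper: (a) is the same appeal to Remark~\ref{max-prin}; (b) uses the same test function $\eta_u-\oveta$, discards the same sign-definite term, and closes with the same H\"older/Poincar\'e--Wirtinger pair; (c) starts from the same identity $\int_\O (q\,u)^2\,\eta_u\,dx=A$ and your bound $|A|\le|\oveta|\bigl(\|q\,u\|_2^2+\gamma^2\|q\,u\|_3^4\bigr)$ is, after $\|q\,u\|_2\le|\O|^{1/6}\|q\,u\|_3$, exactly the paper's inequality~\eqref{eta0}. The only genuine divergence is (d). The paper avoids truncation altogether: it observes that $w_\tau:=\xi_u+\tau$ solves $-\Delta w_\tau+(q\,u)^2 w_\tau=(q\,u)^2(\tau-\chi)$ and applies Remark~\ref{max-prin} (whose sign conclusion is itself obtained by a reflection trick on the minimizer) with $\tau=\sup\chi$ and $\tau=\inf\chi$, getting the slightly sharper two-sided bound $-\sup\chi\le\xi_u\le-\inf\chi$. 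Your Stampacchia argument at level $M=\|\chi\|_\infty$ is correct and self-contained: the identity you display is right, the right-hand side is nonpositive because $\chi+M\ge0$, and the conclusion $w^+\equiv0$ does require, as you note, both the connectedness of the domain $\O$ (to pass from $\nabla w^+=0$ to $w^+$ constant) and $q\,u\ne0$ (to kill a positive constant); the paper's version hides these same ingredients inside Remark~\ref{max-prin}. What your route buys is independence from the symmetric/variational characterization of ${\cal L}_{qu}$; what the paper's buys is brevity and reuse of a remark already established.
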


\begin{Proof} (a)\ The assertion is a straightforward consequence of Remark~\ref{max-prin}. \\
(b)\ Fix $u \in \Lambda_q$. Multiplying~\eqref{eta-u} by $\eta_u - \oveta$ yields
\begin{equation*}
\|\nabla \eta_u\|_2^2 + \int_\O (q\, u)^2 \, \eta_u \, (\eta_u - \oveta) \,   dx  = 0 \, ,
\end{equation*}
whence
\begin{equation*}
\begin{split}
\|\nabla \eta_u\|_2^2
& \le  \|\nabla \eta_u\|_2^2 + \int_\O (q\, u)^2 \, (\eta_u-\oveta)^2 \, dx = -  \int_\O (q\, u)^2 \, \oveta \, (\eta_u - \oveta)  \, dx \\
&  \le
 \|q\, u\|_3^2 \; | \oveta | \,  \|\eta_u {-} \oveta\|_3
 \le \gamma \, \|q\, u\|_3^2 \; |\oveta| \,  \|\nabla \eta_u \|_2 \, .
\end{split}
\end{equation*}
(c)\
Integrating~\eqref{eta-u} over $\O$ gives
$\int_\O (q\, u)^2 \, \eta_u \, dx = A$,
whence
\begin{equation}\label{eta1}
|A| \le \int_\O (q\, u)^2 \, |\eta_u| \, dx  \le
\|q \, u\|_3^2 \, \|\eta_u\|_3 \, .
\end{equation}
From~\eqref{eta1} and Part~(b) it follows
\begin{equation*}
\begin{split}
\dfrac{|A|}{\|q \, u\|_3^2} & \le \|\eta_u {-} \oveta\|_3 + \|\oveta\|_3 \\
& \le \gamma \, \|\nabla \eta_u \|_2 + | \overline \eta_u | \, |\O|^{1/3} \le
\bigl(\gamma^2 \, \|q\, u\|_3^2 +  |\O|^{1/3} \bigr) \, | \overline \eta_u | \, ,
\end{split}
\end{equation*}
whence
\begin{equation}\label{eta0}
| \overline \eta_u | \ge \dfrac{|A|}{\|q \, u\|_3^2 \, \bigl(\gamma^2 \, \|q\, u\|_3^2 +  |\O|^{1/3} \bigr)}
\end{equation}
for every $u \in \Lambda_q$. If $u\in \Lambda_q$ and $\|q \, u\|_3 \to 0$,  \eqref{eta0} implies $|\overline \eta_u| \to \infty$. \\
(d)\ Fix $u \in \Lambda_q$.
Let $\tau \in \R$ and define $w_\tau := \xi_u + \tau$; observe that $w_\tau$ solves the equation
$-\Delta w_\tau + (q \, u)^2 \, w_\tau  = (q\, u)^2\, (\tau - \chi)$.
With $\tau = \sup \chi$ (respectively, $\tau = \inf \chi$), Remark~\ref{max-prin} implies $\xi_u \ge -\sup \chi$ (respectively, $\xi_u \le -\inf \chi$) in $\O$. This proves the assertion.
\end{Proof}

Recall that $\chi$ is the unique solution of~\eqref{chi}; by elliptic regularity theory and Sobolev's inequalities, there exists $\kappa \in(0,\infty)$ such that
\begin{equation}\label{chi-infty}
\|\chi\|_{\infty} \le \kappa \, \|\al\|_{1/2} \, .
\end{equation}
Let $\sigma \in (0,\infty)$ be such that $\|u\|_3 \le \sigma \, \|\nabla u\|_2$ for every $u\in H^1_0(\O)$. Let
\begin{equation}\label{delta}
\delta:= \dfrac{1}{\kappa \, \sigma} \, .
\end{equation}

\begin{proposition}\label{PROP-J}
Assume $A\ne 0$ and $\|q\|_6  \, \|\al\|_{1/2} < \delta$. Then:
\begin{itemize}
\item [{\rm (a)}] $J$ is bounded from below and coercive in $\Lambda_q$.
\item [{\rm (b)}] If  $\|q \, u\|_3 \to 0$, then $J(u) \to \infty$.
\item [{\rm (c)}] For $\{u_n\} \subset \Lambda_q$, the sequence $\{J(u_n)\}$ is unbounded if, and only if, either $\{u_n\}$ is unbounded or $\{\|q\, u_n\|_3\}$ is not bounded away from $0$.
    \item [{\rm (d)}] $J$ satisfies the Palais-Smale condition in $\Lambda_q$.
\end{itemize}
\end{proposition}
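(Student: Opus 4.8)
The plan is to begin by deriving an explicit, workable formula for $J$ that exposes its sign structure. Starting from $J(u)=F(u,\Phi(u))$ and using $F'_\p(u,\Phi(u))=0$ — equivalently, testing the equation for $\Phi(u)=\eta_u+\xi_u$ against $\Phi(u)$ to eliminate $\|\nabla\Phi(u)\|_2^2$ — a direct computation collapses $F$ to
\[
J(u)=\|\nabla u\|_2^2+m^2\|u\|_2^2-\int_\O(qu)^2\chi^2\,dx-\int_\O(qu)^2\chi\,\Phi(u)\,dx+A\,\overline{\Phi(u)}.
\]
Next I would test \eqref{eta-u} against $\eta_u$ and against $\xi_u$, and \eqref{xi-u} against $\xi_u$ and against $\eta_u$. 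The two \emph{diagonal} identities give $A\,\oveta=\|\nabla\eta_u\|_2^2+\int_\O(qu)^2\eta_u^2\,dx\ge0$ (consistent with Lemma~\ref{eta-xi}(a)) and $-\int_\O(qu)^2\chi\,\xi_u\,dx=\|\nabla\xi_u\|_2^2+\int_\O(qu)^2\xi_u^2\,dx\ge0$, while comparing the two \emph{mixed} identities yields the key relation $-\int_\O(qu)^2\chi\,\eta_u\,dx=A\,\ovxi$. Substituting, I obtain the working formula
\[
J(u)=\|\nabla u\|_2^2+m^2\|u\|_2^2-\int_\O(qu)^2\chi^2\,dx+A\,\oveta+\|\nabla\xi_u\|_2^2+\int_\O(qu)^2\xi_u^2\,dx+2A\,\ovxi,
\]
in which every term is manifestly nonnegative except $-\int(qu)^2\chi^2$ and $2A\,\ovxi$.

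For (a) and (b), the single genuinely negative term is controlled by H\"older's inequality and the Sobolev constant $\sigma$: since $\|\chi\|_\infty\le\kappa\|\al\|_{1/2}$ by \eqref{chi-infty}, one has $\int_\O(qu)^2\chi^2\le\|\chi\|_\infty^2\|q\|_6^2\|u\|_3^2\le\theta\,\|\nabla u\|_2^2$ with $\theta:=(\kappa\sigma\|q\|_6\|\al\|_{1/2})^2<1$ by \eqref{delta}. Because $|\ovxi|\le\|\xi_u\|_\infty\le\|\chi\|_\infty\le\kappa\|\al\|_{1/2}$ by Lemma~\ref{eta-xi}(d), the formula gives $J(u)\ge(1-\theta)\|\nabla u\|_2^2-2|A|\kappa\|\al\|_{1/2}$, which is (a). For (b) I instead keep the nonnegative term $A\,\oveta$, obtaining $J(u)\ge(1-\theta)\|\nabla u\|_2^2+A\,\oveta-2|A|\kappa\|\al\|_{1/2}$; since $A\,\oveta=|A|\,|\oveta|$ and Lemma~\ref{eta-xi}(c) forces $|\oveta|\to\infty$ whenever $\|qu\|_3\to0$, we conclude $J(u)\to\infty$.

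Turning to (c), the implication ``$\{u_n\}$ unbounded or $\{\|qu_n\|_3\}$ not bounded away from $0$ $\Rightarrow\{J(u_n)\}$ unbounded'' follows at once from the coercivity in (a) and from (b), applied along subsequences; its contrapositive is the nontrivial content, namely that $\{u_n\}$ bounded in $H^1_0(\O)$ together with $\|qu_n\|_3\ge\beta>0$ implies $\{J(u_n)\}$ bounded. The crux here — which I expect to be the main obstacle — is the compactness of the map $u\mapsto qu$ from bounded subsets of $H^1_0(\O)$ into $L^3(\O)$. This is \emph{not} the (false) compactness of multiplication $L^6\to L^3$; it is rescued by the criticality assumption $q\in L^6(\O)$. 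Indeed, from a bounded sequence I extract $u_n\rightharpoonup u$ in $H^1_0(\O)$ with $u_n\to u$ a.e.\ (Rellich), so $qu_n\to qu$ a.e.; meanwhile $\int_E|qu_n|^3\le\|q\|_{L^6(E)}^3\,\|u_n\|_6^3$ and the absolute continuity of $\int|q|^6$ make $\{|qu_n|^3\}$ uniformly integrable, whence Vitali's theorem yields $qu_n\to qu$ in $L^3(\O)$. Then $\|qu\|_3\ge\beta>0$ forces $u\in\Lambda_q$, and the continuous dependence of ${\cal L}_{qu_n}$ on $qu_n$ (Proposition~\ref{lax-mil}(c) and Remark~\ref{equation}) makes $\eta_{u_n},\xi_{u_n}$ converge in $H^1(\O)$, so every term of the working formula stays bounded, proving (c).

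Finally, for the Palais--Smale condition (d), let $\{u_n\}\subset\Lambda_q$ satisfy $\{J(u_n)\}$ bounded and $J'(u_n)\to0$. By (a) the sequence is bounded, and by (c) (equivalently by (b)) $\|qu_n\|_3$ is bounded away from $0$; passing to a subsequence, $u_n\rightharpoonup u$ and, by the compactness just established, $qu_n\to qu$ in $L^3(\O)$ with $qu\ne0$, so $u\in\Lambda_q$. Using $\langle J'(u),v\rangle=2\int_\O\bigl(\nabla u\,\nabla v+(m^2-q^2(\Phi(u)+\chi)^2)\,u\,v\bigr)\,dx$ and testing $J'(u_n)\to0$ against $u_n-u$, the zeroth-order contribution is dispatched by grouping the critical factors as $(qu_n)\,\bigl(q(u_n-u)\bigr)$ and invoking $\|q(u_n-u)\|_3\to0$ together with the $H^1$-boundedness of $\Phi(u_n)+\chi$. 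This forces $\int_\O\nabla u_n\nabla(u_n-u)\,dx\to0$, hence $\|\nabla u_n\|_2\to\|\nabla u\|_2$, and therefore $u_n\to u$ strongly in $H^1_0(\O)$ with limit in $\Lambda_q$, establishing (d).
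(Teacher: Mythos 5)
Your proof is correct, and for parts (a)--(c) it follows essentially the paper's route: you arrive at the same decomposition
\begin{equation*}
J(u)=\|\nabla u\|_2^2+\int_\O\bigl(m^2-q^2\chi^2-q^2\chi\,\xi_u\bigr)u^2\,dx+2A\,\ovxi+A\,\oveta
\end{equation*}
(your working formula is exactly this with $-\int_\O(qu)^2\chi\,\xi_u\,dx$ expanded as $\|\nabla\xi_u\|_2^2+\int_\O(qu)^2\xi_u^2\,dx$), the same lower bound with constant $1-(\kappa\sigma\|q\|_6\|\al\|_{1/2})^2$, and the same appeal to Lemma~\ref{eta-xi}. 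Two points of genuine divergence are worth recording. First, in (c) the paper simply asserts that a bounded sequence in $H^1_0(\O)$ has, up to a subsequence, a limit in $L^6(\O)$ and concludes $qu_n\to qu$ in $L^3(\O)$; since $H^1_0(\O)\hookrightarrow L^6(\O)$ is the critical, non-compact embedding in dimension three, this step really does need the uniform-integrability/Vitali argument you give (or, equivalently, splitting $q$ into a bounded part plus a part with small $L^6$ norm and using compactness into $L^3$). You correctly single this out as the delicate point and repair it; this is a more careful treatment than the paper's. Second, for (d) the paper works with the identity $\Delta u_n=-\tfrac12 J'(u_n)+m^2u_n-q^2(\eta_{u_n}+\xi_{u_n}+\chi)^2u_n$, boundedness of the right-hand side in $L^{6/5}(\O)$, and compactness of the inverse Laplacian from $L^{6/5}(\O)$ into $H^1_0(\O)$; you instead run the standard Palais--Smale argument of testing $J'(u_n)$ against $u_n-u$. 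Both routes are valid; yours leans on the compactness of $u\mapsto qu$ already established, while the paper's leans on elliptic compactness. Your step does require the $L^6$-boundedness of $\Phi(u_n)+\chi$, which you assert rather than derive, but it follows exactly as in the paper from~\eqref{below}, Lemma~\ref{eta-xi}(b) and Lemma~\ref{eta-xi}(d). One terminological slip: in (c), what you call the ``contrapositive'' of the easy implication is in fact its converse (the other half of the equivalence); the argument you then give is the correct one for that half.
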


\begin{proof}
To begin with, let us write the functional $J$ in terms of $u$, $\eta_u$, and $\xi_u$.
To simplify the notation, let $\p_u:= \Phi(u)$. By Remark~\ref{equation}, $\p_u$ solves the homogeneous Neumann problem associated with the equation
\begin{equation*}
- \Delta \p + (q \, u)^2 \, \p =  \dfrac{A}{|\O|} - (q \, u)^2 \, \chi\, \, . \end{equation*}
Then,
\begin{equation*}
\|\nabla \p_u\|_2^2
=  A  \, \overline \p_u  -
\int_\O (q \, u)^2 \, \chi \, \p_u \, dx - \int_\O (q \, u)^2 \, \p_u^2 \, dx  \, ,
\end{equation*}
and thus,
\begin{equation}\label{J-one}
\begin{split}
J(u)  &= F\bigl(u,\p_u\bigr)   \\ &= \|\nabla u\|_2^2 +  \int_\O \bigl(m^2 - q^2 \, \chi^2 \bigr) \, u^2 \, dx -  \int_\O (q \, u)^2 \, \chi \, \p_u \, dx +  A  \, \overline \p_u  .
\end{split}
\end{equation}
Recall that $\p_u=\eta_u+\xi_u$ and observe that
\begin{equation*}
- \int_\O (q \, u)^2 \, \chi \, \eta_u \, dx  =   A \, \ovxi  \, ;
\end{equation*}
this is easily obtained by multiplying Equation~\eqref{eta-u} by $\xi_u$ and Equation~\eqref{xi-u} by $\eta_u$. Substituting into~\eqref{J-one} yields
\begin{equation*}
J(u)   =  \|\nabla u\|_2^2 +  \int_\O \bigl(m^2 - q^2 \, \chi^2 - q^2 \, \chi \,  \xi_u \bigr) \, u^2 \, dx  + 2\, A \, \ovxi +  A \, \oveta
\end{equation*}
for every $u\in \Lambda_q$. \\
(a)\  By~\eqref{chi-infty} and H\"older's inequality,
\begin{equation}\label{co1}
\Bigl|\int_\O q^2 \, \chi^2 \, u^2 \, dx \Bigr|
 \le  \kappa^2 \, \sigma^2 \, \|q\|_6^2 \, \|\al\|_{1/2}^2 \,   \|\nabla u\|_2^2  \, ;
\end{equation}
multiplying~\eqref{xi-u} by $\xi_u$ gives
\begin{equation*}
- \int_\O (q \, u)^2 \, \chi \, \xi_u \, dx = \int_\O \bigl(|\nabla \xi_u|^2 + (q \, u)^2 |\xi_u|^2\bigr) \, dx \ge 0 \,  ;
\end{equation*}
finally, Lemma~\ref{eta-xi}(d) and \eqref{chi-infty} give
\begin{equation}\label{co2}
|\ovxi| \le \kappa \, \|\al\|_{1/2} \, .
\end{equation}
Thus,
\begin{equation}\label{below}
J(u)  \ge  \Bigr[1 - \kappa^2 \, \sigma^2 \, \|q\|_6^2 \, \|\al\|_{1/2}^2 \Bigl] \, \|\nabla u\|_2^2 - 2\, \kappa \, |A| \, \|\al\|_{1/2} +  A \, \oveta \, .
\end{equation}
Note that the quantity within brackets is strictly positive; moreover, $A \, \oveta \ge 0$ by Lemma~\ref{eta-xi}(a). Thus, \eqref{below} implies the desired properties of $J$. \\
(b)\ The assertion readily follows from~\eqref{below} and Lemma~\ref{eta-xi}(c).\\
(c)\ In view of~\eqref{co1}, \eqref{co2}, and the inequality
\begin{equation*}
\Bigl|\int_\O q^2 \, \chi \, \xi_u \, u^2 \, dx \Bigr|
 \le  \kappa^2 \, \sigma^2 \, \|q\|_6^2 \, \|\al\|_{1/2}^2 \,   \|\nabla u\|_2^2  \, ,
\end{equation*}
there exist $c_1, c_2 \in (0,\infty)$ such that
\begin{equation}\label{upper-bound}
J(u)  \le c_1 \, \|\nabla u\|_2^2 + c_2 + |A| \, |\overline{\eta}_u| \quad \hbox{for every $u\in \Lambda_q$.}
\end{equation}
Suppose $\{u_n\}\subset \Lambda_q$ is bounded and $\|q \, u_n\|_3 \ge r$ for every $n$, for some $r\in(0,\infty)$. Up to a subsequence, $\{u_n\}$ has in $L^6(\O)$ a limit $u$. Since $q \, u_n \to q\, u$ in $L^3(\O)$ and $\|q \, u_n\|_3 \ge r$, we deduce $q \, u \ne 0$ and thus, $\eta := {\cal L}_{q u} ({A}/{|\O|})$ is well defined. By Proposition~\ref{lax-mil}(a),  $\eta_{u_n}:= {\cal L}_{qu_n} ({A}/{|\O|})$ converges to $\eta$ in $H^1(\O)$, which implies $|\overline{\eta}_{u_n}| \to |\overline{\eta}|$. Thus, by~\eqref{upper-bound}, the sequence $\{J(u_n)\}$ is bounded. This proves the ``only if'' part of the statement; the ``if'' part easily follows from (a) and (b). \\
(d)\ Suppose that $\{u_n\} \subset \Lambda_q$ is a Palais-Smale sequence, that is, $\{J(u_n)\}$ is bounded and $J'(u_n) \to 0$; we have to show that, up to a subsequence, $\{u_n\}$ converges in $\Lambda_q$.
Since $J$ is coercive, $\{u_n\}$ is bounded in $H^1_0(\O)$; up to a subsequence, it converges weakly to some $u\in H^1_0(\O)$. Observe that
\begin{equation}\label{PS1}
\D u_n  = -  \frac{1}{2} \, J'(u_n) +  m^2 \, u_n  - q^2\, (\eta_{u_n} + \xi_{u_n} + \chi)^2 \,  u_n \, .
\end{equation}
The first and second summands in the right-hand side of~\eqref{PS1} are bounded in $H^{-1}(\O)$.
By~\eqref{below}, the sequence $\{|\overline \eta_{u_n}|\}$ is bounded; Lemma~\ref{eta-xi}(b)
implies that $\{\eta_{u_n}\}$ is bounded in $H^1(\O)$, and thus, in $L^6(\O)$. By Lemma~\ref{eta-xi}(d), $\{\xi_{u_n}+\chi\}$ is bounded in $L^6(\O)$ as well. It follows that $\{(\eta_{u_n} + \xi_{u_n} + \chi)^2\}$ is bounded in $L^3(\O)$, which in turn implies that $\bigl\{q^2 \, (\eta_{u_n} + \xi_{u_n} + \chi)^2 \, u_n \bigr\}$ is bounded in $L^{6/5}(\O)$, and therefore in $H^{-1}(\O)$.
On account of~\eqref{PS1}, the sequence $\{\Delta u_n\}$ is bounded in $H^{-1}(\O)$;
the compactness of the inverse Laplace operator implies that, up to a subsequence, $\{u_n\}$ converges to $u$ in $H^1_0(\O)$. Since $\{J(u_n)\}$ is bounded, Proposition~2.4(b) and Part~(b) imply $u \in \Lambda_q$.
\end{proof}

\begin{remark}
Part~(a) of Proposition~\ref{PROP-J} holds true also if $A =0$.
\end{remark}

\section{Proof of the main results}\label{proof-of-main}

\begin{Proof}[Proof of Theorem~\ref{main}]
On account of the correspondence between critical points of $J$ and nontrivial solutions to Problem~\eqref{SYS-no}-\eqref{BC-no},
it suffices to prove that $J$ has a sequence of critical points $\{u_n\} \subset \Lambda_q$ satisfying (i) and (ii). \\
Suppose $A \ne 0$. With $\delta$ as defined in~\eqref{delta}, assume
$\|q\|_6 \, \|\alpha\|_{1/2}  < \delta$.
By Proposition~\ref{PROP-J}, the functional $J$ is bounded from below, has complete sublevels, and satisfies the Palais-Smale condition in $\Lambda_q$. This readily implies that $J$ attains its minimum at some $u_0 \in \Lambda_q$; by Remark~\ref{even}, we can assume $u_0 \ge 0$ in $\O$. \\
Recall that $\Lambda_q$ has infinite genus, by Proposition~\ref{lambda-q}(c). Thus, Ljusternik-Schnirelmann Theory applies (see~\cite[Corollary 4.1]{szulkin} and \cite[Remark~3.6]{ACZ}) and $J$ has a sequence $\{u_n\}_{n \ge 1}$ of critical points in $\Lambda_q$. Standard arguments show that $J(u_n)\to\infty$ (see~\cite[Chapter 10]{AM}).\\
Let $\{v_{n}\}$ be a bounded subsequence of $\{u_n\}$. By Proposition~\ref{PROP-J}(c), every subsequence of $\{v_{n}\}$ 
has a subsequence $\{v_{k_n}\}$ such that $\|q\, v_{k_n}\|_3\to 0$; this proves that $\|q \, v_n\|_3 \to 0$.
\end{Proof}

\begin{Proof}[Proof of Theorem~\ref{A-zero}]
Given the equivalence between Problem~\eqref{SYS-no}-\eqref{BC-no} and Problem~\eqref{PROB}, it suffices to prove that the latter does not have nontrivial solutions.\\
Assume $\|q\|_6 \, \|\al\|_{1/2} < \delta$.
Let $(u,\p)$ be a solution to~\eqref{PROB} with $A=0$.
Multiplying the first equation in~\eqref{PROB} by $u$ gives
\begin{equation*}\label{u-1}
\|\nabla u\|_2^2  + \int_\O m^2 \, u^2 \, dx - \int_\O (q\, u)^2 \, (\p+\chi)^2 \, dx = 0 \, ,
\end{equation*}
whence
\begin{equation}\label{u-2}
\|\nabla u\|_2^2  + \int_\O \bigl(m^2 - q^2 \, \chi^2) \, u^2 \, dx - \int_\O (q\, u)^2 \, \p^2 \, dx = 2 \int_\O (q\, u)^2 \, \chi\, \p \, dx \, .
\end{equation}
Multiplying the second equation in~\eqref{PROB} by $\p$ gives
\begin{equation}\label{phi-1}
\int_\O (q\, u)^2 \, \chi \, \p \, dx =  - \int_\O (q\, u)^2 \, \p^2 \, dx - \|\nabla \p\|_2^2 \, .
\end{equation}
Substituting~\eqref{phi-1} into~\eqref{u-2}, and taking~\eqref{co1} into account, gives
\begin{equation*}
\begin{split}
0  & = \|\nabla u\|_2^2 + \int_\O \bigl(m^2 - q^2 \, \chi^2 \bigr) \, u^2 \, dx +
\int_\O (q\, u)^2 \, \p^2 \, dx + 2 \, \|\nabla \p\|_2^2 \\
& \ge \Bigr[1 - \kappa^2 \, \sigma^2 \, \|q\|_6^2  \, \|\al\|_{1/2}^2  \Bigl] \, \|\nabla u\|_2^2 \, .
\end{split}
\end{equation*}
Since the quantity between brackets is strictly positive, we get $u=0$.
\end{Proof}

\end{document}